\documentclass[11pt,reqno]{amsart}

\usepackage[margin=1.1in]{geometry}
\usepackage{amsmath,amssymb,amsthm}
\usepackage{booktabs}
\usepackage[hidelinks]{hyperref}

\theoremstyle{plain}
\newtheorem{theorem}{Theorem}
\newtheorem{lemma}[theorem]{Lemma}
\newtheorem{corollary}[theorem]{Corollary}
\theoremstyle{remark}
\newtheorem{remark}[theorem]{Remark}

\newcommand{\zz}{z}

\begin{document}

\title[Five improved lower bounds for $\zz(m,n;3,3)$]
      {Five improved lower bounds for Zarankiewicz numbers $\zz(m,n;3,3)$}

\author{Abhishek Saurabh}
\address{Independent researcher}
\email{saurabh.abhishek1985@gmail.com}

\date{\today}

\subjclass[2020]{05C35, 05D05, 05B20}
\keywords{Zarankiewicz problem, Zarankiewicz numbers, $K_{3,3}$-free bipartite graphs,
extremal combinatorics, computer search}

\begin{abstract}
We record five improved lower bounds for Zarankiewicz numbers with $s=t=3$:
\[
  \zz(13,19;3,3)\ge 118,\quad
  \zz(14,19;3,3)\ge 126,\quad
  \zz(16,18;3,3)\ge 136,
\]
\[
  \zz(14,20;3,3)\ge 126,\quad
  \zz(16,19;3,3)\ge 136 .
\]
The first three are certified by explicit $K_{3,3}$-free $0/1$ matrices; the last two follow from the
second and third by monotone padding. Compared with the lower bounds compiled in Figure~2 of
\cite{BNL}, namely $114$, $121$, $130$, $125$ and $132$, the improvements are $+4$, $+5$, $+6$, $+1$
and $+4$ respectively. The three matrices were produced by a kicked-greedy local search operated
autonomously by a discovery system and were verified exactly, by exhaustive inspection of every
$3\times3$ row/column triple, in several mutually independent implementations; they were also
re-verified independently by the authors of \cite{BNL} using their own verifier in July 2026. All three
witnesses are printed in full in Appendix~\ref{sec:witnesses} and accompany this note as
machine-readable ancillary files together with standalone, dependency-free verifiers.
\end{abstract}

\maketitle

\section{Introduction}

For positive integers $m,n,s,t$, the \emph{Zarankiewicz number} $\zz(m,n;s,t)$ is the maximum number
of $1$s in an $m\times n$ matrix with entries in $\{0,1\}$ that contains no all-ones $s\times t$
submatrix, i.e.\ no $s$ rows and $t$ columns whose $st$ common entries are all $1$. Equivalently, it
is the maximum number of edges in a bipartite graph with parts of sizes $m$ and $n$ containing no
$K_{s,t}$ with the $s$-side inside the part of size $m$. Throughout this note $s=t=3$, so the
convention is symmetric and no ambiguity arises; we abbreviate $\zz(m,n)=\zz(m,n;3,3)$ where
convenient. Determining $\zz(m,n;s,t)$ is the Zarankiewicz problem, posed by
Zarankiewicz~\cite{Zar51}; the classical general upper bound is due to K\H{o}v\'ari, S\'os and
Tur\'an~\cite{KST54}.

For $s=t=3$ the exact values are known only in a limited range, and the current state of knowledge is
a table of lower and upper bounds. The lower bounds are, with few exceptions, produced by computer
search over $0/1$ matrices; the upper bounds come from counting arguments, integer/linear programming
relaxations, and exhaustive or SAT-based reasoning. The bounds relevant here are drawn from the
following sources.

\begin{itemize}
\item Collins, Riasanovsky, Wallace and Radziszowski~\cite{CRWR16} give, in their Table~4, upper bounds
      that remain the tightest published values in several cells of interest. In particular
      $\zz(16,18;3,3)\le 140$, which is tighter than the value $146$ appearing for that cell in the
      compilation of Figure~2 of~\cite{BNL}.
\item Tan~\cite{Tan22} applies SAT solving to
      Zarankiewicz numbers.
\item Davies, Gill and Horsley~\cite{Dav24} obtain
      upper bounds by linear programming.
\item Bhan, Nobili and Langer~\cite{BNL} obtain lower bounds by an
      LLM-driven evolutionary search. Their Figure~2 is a per-cell grid giving, for each $(m,n)$ in
      the range considered, a known upper bound together with the lower bound achieved by their own
      construction. These are the lower bounds that the present note improves, and all
      ``previous best'' values quoted below refer to that figure.
\end{itemize}

Two further papers appeared in August 2026, after the constructions reported here were found and
communicated: Hou~\cite{Hou26} and
Afrasyab~\cite{Afr26} close or improve a number of
cells. Neither paper addresses any of the five cells treated here: Hou's results concern
$(12,18)$, $(13,17)$, $(13,18)$, $(14,17)$, $(14,18)$, $(15,17)$ and $(15,18)$, and Afrasyab's
concern $(12,n)$ for $18\le n\le 22$ together with $(13,22)$, $(13,18)$, $(14,17)$, $(14,18)$,
$(15,17)$, $(15,18)$ and $(16,17)$. We comment on the overlap at $(13,17)$ in
Remark~\ref{rem:1317}; none of the five bounds claimed here is affected by those papers.

The contribution of this note is deliberately narrow: five improved lower bounds, each backed by an
explicit witness that a reader can verify in under a second on a laptop. We make no claim of novelty
for the search method, which is a standard kicked-greedy local search, and no claim about upper
bounds.

\section{The bounds}

\begin{theorem}\label{thm:main}
There exist $K_{3,3}$-free $0/1$ matrices realising
\[
  \zz(13,19;3,3)\ \ge\ 118,\qquad
  \zz(14,19;3,3)\ \ge\ 126,\qquad
  \zz(16,18;3,3)\ \ge\ 136 .
\]
\end{theorem}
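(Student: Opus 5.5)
The statement is existential and certified by explicit witnesses, so the proof is a construction followed by a finite verification. For each of the three cells $(m,n,w)\in\{(13,19,118),(14,19,126),(16,18,136)\}$ I will exhibit a specific $m\times n$ matrix $A$ over $\{0,1\}$, namely the one printed in Appendix~\ref{sec:witnesses}. I will then check two facts about it: that $A$ contains exactly $w$ ones, and that $A$ is $K_{3,3}$-free. Since $\zz(m,n;3,3)$ is defined as a maximum over all such matrices, these two facts immediately give $\zz(m,n;3,3)\ge w$.

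The count of ones is a direct row-sum tally. For $K_{3,3}$-freeness I will not enumerate all $\binom{m}{3}\binom{n}{3}$ pairs of triples, although that is feasible: it is at most $560\cdot 816$ for $(16,18)$. Instead I will use the equivalent criterion: for every triple of rows $\{r_1,r_2,r_3\}$, the common neighbourhood $N(r_1)\cap N(r_2)\cap N(r_3)$ has size at most $2$. Checking this needs only $\binom{m}{3}$ intersection computations per matrix, at most $560$. Each can be done with row bitmasks and a popcount, which makes the verification transparent and easy to reproduce independently. As a sanity check, I will also confirm the transposed criterion over column triples; by symmetry of the definition it must agree.

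The hard part is not the verification but producing the witnesses in the first place, and the plan there is a kicked-greedy local search. Start from the empty matrix, or from a seed such as a good $(m-1)\times n$ or $m\times(n-1)$ configuration padded with zeros. Greedily add a $1$ in any cell whose addition creates no row triple with three common columns. While doing so, maintain for each row triple the count of its common columns, so that feasibility of a cell $(i,j)$ is checked only against the triples containing row $i$. At a local maximum, apply a kick: remove a small random set of ones, say $3$--$8$, preferring cells that block many potential additions, then re-run the greedy phase. Accept the new state if its weight is at least the old one, and restart periodically. The targets $118$, $126$ and $136$ exceed the previous bounds $114$, $121$ and $130$ of \cite{BNL}. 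The main obstacle I expect is the $(16,18)$ cell, which must get within $4$ of the upper bound $140$ of \cite{CRWR16}. There I would first try seeding with structured configurations, such as near-regular row degrees of about $8$--$9$ ones per row, or with column symmetry imposed under a cyclic group, before resorting to longer unstructured runs.

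Once a matrix reaching the target is found, the search plays no further role. The proof consists solely of recording the matrix and carrying out the exact check described above. I will do that check in several independent implementations, to guard against transcription errors between the search output, the appendix, and the ancillary files.
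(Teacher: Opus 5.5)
Your proposal is correct and is essentially the paper's proof: you exhibit the three appendix matrices, recount the $1$s, and certify $K_{3,3}$-freeness by a finite exhaustive check. Your test that every row triple has at most $2$ common columns is an equivalent, cheaper form ($\binom{m}{3}$ rather than $\binom{m}{3}\binom{n}{3}$ tests) of the paper's enumeration of all row-triple/column-triple pairs, and close in spirit to the set-arithmetic checker the paper also mentions; the search discussion plays no role in the proof, as the paper's separate Method section and your own closing remark both acknowledge.
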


\begin{proof}
The three matrices $A_{13\times19}$, $A_{14\times19}$ and $A_{16\times18}$ are displayed in
Appendix~\ref{sec:witnesses}. Each is checked to have entries in $\{0,1\}$, to have the stated number
of $1$s, and to contain no all-ones $3\times 3$ submatrix; the last is decided by inspecting every
one of the $\binom{m}{3}\binom{n}{3}$ triples of rows and triples of columns, which is $277{,}134$,
$352{,}716$ and $456{,}960$ checks respectively. Appendix~\ref{sec:verify} gives the recipe.
\end{proof}

The remaining two bounds are corollaries of Theorem~\ref{thm:main} and the following elementary
monotonicity, which we state and prove only because we rely on it explicitly.

\begin{lemma}[Monotone padding]\label{lem:pad}
For all positive integers $m,n,s,t$,
\[
  \zz(m,n;s,t)\ \le\ \zz(m,n+1;s,t)
  \qquad\text{and}\qquad
  \zz(m,n;s,t)\ \le\ \zz(m+1,n;s,t).
\]
\end{lemma}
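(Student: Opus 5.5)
The plan is to prove both inequalities by zero-padding. I take an extremal matrix for the smaller dimensions and append an all-zero column, or an all-zero row. This adds no $1$s. The only thing to check is that it cannot create a forbidden all-ones $s\times t$ submatrix.

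For the first inequality, let $A$ be an $m\times n$ $0/1$ matrix with exactly $\zz(m,n;s,t)$ ones and no all-ones $s\times t$ submatrix. Such an $A$ exists because the maximum is taken over a finite nonempty set; the zero matrix is admissible. Form the $m\times(n+1)$ matrix $A'=[A\mid 0]$ by appending a zero column. Then $A'$ has the same number of ones as $A$.

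Now suppose $A'$ contained $s$ rows $R$ and $t$ columns $C$ whose $st$ common entries are all $1$. Since $t\ge 1$, $C$ is nonempty. If $C$ contained the new column $n+1$, then for any row $r\in R$ (nonempty since $s\ge1$) the entry $A'_{r,n+1}=0$, a contradiction. Hence $C\subseteq\{1,\dots,n\}$. But then $R\times C$ is an all-ones $s\times t$ submatrix of $A$ itself, contradicting the choice of $A$. So $A'$ is admissible for $\zz(m,n+1;s,t)$, and therefore $\zz(m,n+1;s,t)\ge \zz(m,n;s,t)$.

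The second inequality follows by the identical argument with a zero row appended, $A''=\binom{A}{0}$. The roles of rows and columns are swapped, and this time $s\ge1$ guarantees that $R$ is nonempty.

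There is no real obstacle here. The only point needing care is that positivity of $s$ and $t$ ensures every candidate submatrix actually meets an entry of the padded line whenever it uses that line. The degenerate case $s>m$ or $t>n$, where every matrix is admissible, is handled by the same argument without change.
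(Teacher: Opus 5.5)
Your proof is correct and is the same zero-padding argument the paper gives: append an all-zero column (resp.\ row), and observe that any $s\times t$ submatrix either avoids the new line, so it is a submatrix of $A$, or meets it and so contains a $0$. One cosmetic slip: in the row-padding case the zero entry is found by choosing a column $c\in C$, so it is $t\ge1$ (nonemptiness of $C$) rather than $s\ge1$ that does the work there; this is harmless, since both $s$ and $t$ are assumed positive.
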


\begin{proof}
Let $A$ be an $m\times n$ $0/1$ matrix with no all-ones $s\times t$ submatrix and with
$\zz(m,n;s,t)$ ones. Form $A'$ by appending one all-zero column, so $A'$ is $m\times(n+1)$ and has
the same number of $1$s. Let $S$ be a set of $s$ rows and $T$ a set of $t$ columns of $A'$. If $T$
avoids the new column then the corresponding submatrix of $A'$ is a submatrix of $A$ and hence is not
all-ones; if $T$ contains the new column then that column contributes an entry $0$, so the submatrix
is again not all-ones. Thus $A'$ is a valid $m\times(n+1)$ configuration, giving
$\zz(m,n+1;s,t)\ge\zz(m,n;s,t)$. Appending an all-zero row gives the second inequality.
\end{proof}

\begin{corollary}\label{cor:pad}
$\zz(14,20;3,3)\ \ge\ \zz(14,19;3,3)\ \ge\ 126$ and
$\zz(16,19;3,3)\ \ge\ \zz(16,18;3,3)\ \ge\ 136$.
\end{corollary}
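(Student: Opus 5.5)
The corollary follows by chaining Lemma~\ref{lem:pad} with Theorem~\ref{thm:main}; no new construction is needed. For the first chain, we apply the column-padding inequality of Lemma~\ref{lem:pad} with $(m,n;s,t)=(14,19;3,3)$. This gives $\zz(14,19;3,3)\le\zz(14,20;3,3)$. The second bound of Theorem~\ref{thm:main} supplies $\zz(14,19;3,3)\ge 126$, and transitivity of $\ge$ then yields $\zz(14,20;3,3)\ge 126$.

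The second chain is identical in form. We apply the same column-padding inequality with $(m,n;s,t)=(16,18;3,3)$, which gives $\zz(16,18;3,3)\le\zz(16,19;3,3)$. Combining this with the third bound of Theorem~\ref{thm:main}, namely $\zz(16,18;3,3)\ge 136$, we obtain $\zz(16,19;3,3)\ge 136$.

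Concretely, a witness for each padded cell is the corresponding matrix from Appendix~\ref{sec:witnesses} with one all-zero column appended. Take $A_{14\times19}$ for the cell $(14,20)$ and $A_{16\times18}$ for the cell $(16,19)$. The proof of Lemma~\ref{lem:pad} shows this padding preserves $K_{3,3}$-freeness and leaves the number of $1$s unchanged.

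There is no real obstacle. The only point to check is bookkeeping: we pad in the column direction (the second argument), not the row direction, so the row counts $14$ and $16$ stay fixed as the statement requires.
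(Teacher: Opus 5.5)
Your proposal is correct and matches the paper's proof: both chain the column-padding inequality of Lemma~\ref{lem:pad} with the second and third bounds of Theorem~\ref{thm:main}. The explicit witnesses are also the same, namely $A_{14\times19}$ and $A_{16\times18}$, each with one all-zero column appended.
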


\begin{proof}
Immediate from Theorem~\ref{thm:main} and Lemma~\ref{lem:pad}. Explicitly, the witnesses are
$A_{14\times19}$ with one all-zero column appended, and $A_{16\times18}$ with one all-zero column
appended; both padded matrices are supplied as ancillary files with their own verifiers.
\end{proof}

We stress that Corollary~\ref{cor:pad} is bookkeeping, not search: it costs no computation, and it
improves the compiled lower bounds for those two cells only because the compilation predates
Theorem~\ref{thm:main}, so the padding implied by the new bounds had not yet been applied to them.

\section{Method}

The three matrices of Theorem~\ref{thm:main} were found by a kicked-greedy local search over $0/1$
matrices of fixed shape: a greedy fill that adds a $1$ only where it creates no all-ones $3\times 3$
submatrix, run to saturation, followed by a randomised ``kick'' that deletes a small random set of
$1$s and re-saturates, with the incumbent retained on non-improvement. The search is classical: no
language model, no learned component, and no problem-specific algebraic input. The objective is the
exact count of $1$s, and $K_{3,3}$-freeness is enforced as a hard constraint at every step rather
than penalised, so every intermediate state --- and in particular every state that is ever recorded
--- is a valid witness.

The search ran inside an autonomous research system (NOVA) that selected the target cells, ran the
searches, and packaged the results; the mathematical content of the search itself is standard. The
system's cell-selection rule is the reason two of the three cells were attacked at all: it targets
cells whose compiled lower bound is dominated by the monotone floor implied by a smaller cell
(Lemma~\ref{lem:pad}), on the grounds that such cells are the ones where the incumbent construction
has most visibly not converged.

Provenance of the individual runs, as recorded at the time:
\begin{itemize}
\item $A_{13\times19}$: found 2026-07-05, seed $2$, $215{,}624$ kicks, wall-clock budget
      $600$ seconds for the cell.
\item $A_{16\times18}$: found 2026-07-05, seed $0$, $130{,}287$ kicks, same budget.
\item $A_{14\times19}$: found during a campaign of 2026-07-03/04, seed $7$; that build of the system
      did not record a kick count.
\end{itemize}
We note that a wall-clock-budgeted stochastic search is not bit-reproducible from a seed
alone, so we do not claim that re-running the search reproduces these matrices. Nothing in this note
depends on that: the claims are statements about three explicit matrices, and those matrices are
verifiable exactly and cheaply.

\medskip
\noindent\textbf{Verification.} Each matrix was verified by exhaustive enumeration of all
$\binom{m}{3}\binom{n}{3}$ row-triple/column-triple pairs, with the count of $1$s recomputed from the
matrix rather than read from any label. Verification was performed by several implementations that
share no code: an array-based checker, a column-pair/column-triple set-arithmetic checker, and a
plain triple-nested loop in the Python standard library, all agreeing. In July 2026 the three
matrices, together with a fourth ($13\times17$; see Remark~\ref{rem:1317}), were communicated to the
authors of~\cite{BNL}, who ran their own verifier on all four and reported no errors, and who
subsequently confirmed in correspondence that the bounds improve on those compiled in their
Figure~2.

\section{Summary of the five cells}

Table~\ref{tab:results} collects the five cells. ``Previous LB'' is the lower bound compiled in
Figure~2 of~\cite{BNL}. ``Best known UB'' is the smallest published upper bound we are aware of, with
its source given per cell: for four of the five cells this is the value appearing in the compilation
of Figure~2 of~\cite{BNL}; for $(16,18)$ it is the tighter value from Table~4 of~\cite{CRWR16}.
``Gap'' is the difference between that upper bound and our lower bound.

\begin{table}[htbp]
\centering
\caption{The five cells. All values for $s=t=3$.}
\label{tab:results}
\begin{tabular}{lccccl}
\toprule
Cell $(m,n)$ & Previous LB & New LB & Best known UB & Gap & Source of the UB \\
\midrule
$(13,19)$ & $114$ & $\mathbf{118}$ & $125$ & $7$  & Fig.~2 of~\cite{BNL} \\
$(14,19)$ & $121$ & $\mathbf{126}$ & $135$ & $9$  & Fig.~2 of~\cite{BNL} \\
$(16,18)$ & $130$ & $\mathbf{136}$ & $140$ & $4$  & Table~4 of~\cite{CRWR16} \\
$(14,20)$ & $125$ & $\mathbf{126}$ & $140$ & $14$ & Fig.~2 of~\cite{BNL} \\
$(16,19)$ & $132$ & $\mathbf{136}$ & $152$ & $16$ & Fig.~2 of~\cite{BNL} \\
\bottomrule
\end{tabular}
\end{table}

\noindent
The improvements over the previous lower bounds are therefore $+4$, $+5$, $+6$, $+1$ and $+4$.
The bounds for $(14,20)$ and $(16,19)$ are those of Corollary~\ref{cor:pad}; the other three are
those of Theorem~\ref{thm:main}.

The $(16,18)$ witness has a notably regular degree distribution: its row sums are eight $9$s and
eight $8$s, and its column sums are $(9^2,8^8,7^6,6^2)$, where exponents denote multiplicities. For
completeness, the corresponding profiles of the other two witnesses are: $A_{13\times19}$, rows
$(10^6,9^2,8^5)$ and columns $(8^1,7^8,6^6,5^2,4^2)$; $A_{14\times19}$, rows
$(11^1,10^3,9^5,8^5)$ and columns $(8^4,7^9,6^2,5^3,4^1)$.

\begin{remark}\label{rem:1317}
A fourth witness produced by the same program in July 2026 gives $\zz(13,17;3,3)\ge 110$, matching
the upper bound $\zz(13,17;3,3)\le 110$ of Table~4 of~\cite{CRWR16} and hence the exact value. That
matrix was found on 2026-07-03/04 and was communicated to, and verified by, the authors of~\cite{BNL}
in July 2026. The value $\zz(13,17;3,3)=110$ subsequently appears in~\cite{Hou26}. We record the
chronology only for context and claim no priority: \cite{Hou26} is the published source for that
value, and we do not restate it as a result of this note. It is mentioned because the same program
and the same search produced the three witnesses of Theorem~\ref{thm:main}.
\end{remark}

\subsection*{Data availability}
The three witnesses of Theorem~\ref{thm:main} and the two padded witnesses of
Corollary~\ref{cor:pad} are printed in Appendix~\ref{sec:witnesses} and are also supplied as
ancillary files with this submission, in two machine-readable forms (bitstrings and JSON) together
with one standalone verifier per witness. The verifiers are written against the Python standard
library only: no third-party packages, no network access, no build step. A file of SHA-256 hashes
covers every ancillary file.

\appendix

\section{The witness matrices}\label{sec:witnesses}

Each matrix is printed one row per line, as a string of $n$ characters in $\{\texttt{0},\texttt{1}\}$
read left to right. These strings are the primary form of the data; the ancillary files contain all
five matrices, these three and the two padded ones, in the identical bitstring form and again in
JSON.

\subsection*{\texorpdfstring{$A_{13\times19}$: $13\times19$, $118$ ones, $K_{3,3}$-free}
                            {A(13x19): 13 x 19, 118 ones, K(3,3)-free}}
\begin{verbatim}
1010010100111001000
0000100110101101100
0101110001110011100
0010100001111000011
0110111100100100010
1001101000011101010
1000110101000001011
1111000011100101001
1111100100001010101
1010110010010100100
0010001111010011110
1000011010101010111
0100011111011100001
\end{verbatim}

\subsection*{\texorpdfstring{$A_{14\times19}$: $14\times19$, $126$ ones, $K_{3,3}$-free}
                            {A(14x19): 14 x 19, 126 ones, K(3,3)-free}}
\begin{verbatim}
1110110100110010000
0010110101001001110
0001010110110000111
1000100011011110011
0100000000111011110
1101010110001101000
1111001001011000101
0110011010101000010
0001111000101110100
0010011010010101100
0101111000010001010
0011100010111001000
1011001100100111011
0100101111000011101
\end{verbatim}

\subsection*{\texorpdfstring{$A_{16\times18}$: $16\times18$, $136$ ones, $K_{3,3}$-free}
                            {A(16x18): 16 x 18, 136 ones, K(3,3)-free}}
\begin{verbatim}
100011101010010001
010000111100011101
110001011101100000
100000010111001011
101100000001111101
100110111000101010
110100001011010110
010010010011101100
110011000100100111
001001011001000111
011101010010010001
001111001110001100
011011100001011010
011000001010101011
000110100111100001
001000110110110110
\end{verbatim}

\subsection*{The padded witnesses}
The $14\times20$ witness for $\zz(14,20;3,3)\ge126$ is $A_{14\times19}$ with the character
\texttt{0} appended to every row; the $16\times19$ witness for $\zz(16,19;3,3)\ge136$ is
$A_{16\times18}$ with the character \texttt{0} appended to every row. Both are supplied explicitly
as ancillary files so that no reader has to perform the transformation by hand.

\section{Verification recipe}\label{sec:verify}

Every witness ships with its own self-contained verifier script, which embeds the matrix and
re-derives everything from it. With any Python~3 interpreter, and with no third-party packages and
no network access:
\begin{verbatim}
    python verify_13x19.py
\end{verbatim}
and likewise for the other four. Each script (i) checks the shape and that every entry lies in
$\{0,1\}$; (ii) recomputes the number of $1$s from the matrix rather than reading it from a label,
failing if the recount disagrees; (iii) enumerates all $\binom{m}{3}\binom{n}{3}$ pairs consisting of
three rows and three columns and fails if any such $3\times3$ submatrix is all-ones; and
(iv) compares the recomputed count with a recorded threshold. Each exits with status $0$ if and only
if all four checks pass, and prints the reason for any failure. The two verifiers for the padded
witnesses additionally delete the appended all-zero column and re-verify the underlying witness, so
that Lemma~\ref{lem:pad} is checked rather than assumed.

The three verifiers for the searched witnesses are the scripts written at the time of discovery and
are shipped unaltered. They therefore compare against an internal threshold of $115$, $124$ and
$132$ rather than against the published lower bounds $114$, $121$ and $130$, and they describe the
results in the provisional language used before the constructions had been checked by anyone else.
That threshold is the larger of the published lower bound and the bound already implied by a smaller
cell through Lemma~\ref{lem:pad}, so it is the stricter of the two and passing it implies passing the
published bound. Neither the threshold nor the wording affects what the scripts verify.

A reader who prefers not to run supplied code can transcribe the strings of
Appendix~\ref{sec:witnesses} and apply the same four checks in any language; the whole computation is
a few hundred thousand elementary tests per matrix.


\end{document}